\documentclass[11pt]{amsart}
\usepackage{amsmath}
\usepackage[psamsfonts]{amssymb}
\usepackage{graphicx}
\usepackage[english]{babel}
\usepackage[all]{xy}
\usepackage{hyperref}
\usepackage{verbatim}

\theoremstyle{definition}
\newtheorem{theorem}{Theorem}[section]
\newtheorem{prop}[theorem]{Proposition}
\newtheorem{lemma}[theorem]{Lemma}
\newtheorem{cor}[theorem]{Corollary}

\newtheorem{question}[theorem]{Question}

\newtheorem{ex}[theorem]{Example}
\theoremstyle{remark}

\newtheorem{remark}[theorem]{Remark}

\def\co{\colon\thinspace}

\def\R{\mathbb{R}}

\DeclareMathOperator{\Cal}{Cal}

\title[$C^0$ convergence and the Calabi homomorphism]{Graphicality, $C^0$ convergence, and the Calabi homomorphism}
\author{Michael Usher}

\begin{document}
\begin{abstract}
Consider a sequence of compactly supported Hamiltonian diffeomorphisms $\phi_k$ of an exact symplectic manifold, all of which are ``graphical'' in the sense that their graphs are identified by a Darboux-Weinstein chart with the image of a one-form.  We show by an elementary argument that if the $\phi_k$ $C^0$-converge to the identity then their Calabi invariants converge to zero.  This generalizes a result of Oh, in which the ambient manifold was the two-disk and an additional assumption was made on the Hamiltonians generating the $\phi_k$.  We discuss connections to the open problem of whether the Calabi homomorphism extends to the Hamiltonian homeomorphism group.  The proof is based on a relationship between the Calabi invariant of a $C^0$-small Hamiltonian diffeomorphism and the generalized phase function of its graph.
\end{abstract}

\maketitle

\section{Introduction}
Let $(M,\lambda)$ be an exact symplectic manifold, that is, $M$ is a smooth, necessarily noncompact manifold of some even dimension $2n$ endowed with a $1$-form $\lambda$ such that $d\lambda$ is nondegenerate.  A compactly supported smooth function $H\co [0,1]\times M\to\R$ defines a time-dependent vector field $\{X_{H_t}\}_{t\in[0,1]}$ by the prescription that $d\lambda(X_{H_t},\cdot)=d(H(t,\cdot))$; the (compactly-supported) Hamiltonian diffeomorphism group of the symplectic manifold $(M,d\lambda)$, which we will write as $Ham^c(M,d\lambda)$, is by definition the group consisting of the time-one maps of such Hamiltonian vector fields. We will denote the time-$t$ map of the Hamiltonian vector field associated to $H\co [0,1]\times M\to \R$ as $\phi_{H}^{t}$.

In \cite{Cal}, Calabi introduced a homomorphism $\Cal\co Ham^c(M,d\lambda)\to\R$ which is determined by the formula \begin{equation}\label{calH} \Cal(\phi_{H}^{1})=\int_{[0,1]\times M} Hdt\wedge (d\lambda)^{n}.\end{equation}
The fundamental importance of this homomorphism is reflected in the fact that, as follows easily from the simplicity of  $\ker(\Cal)$ \cite[Th\'eor\`eme II.6.2]{B} and the obvious fact that $Ham^c(M,d\lambda)$ has trivial center, any non-injective homomorphism $f\co Ham^c(M,d\lambda)\to G$ to any group $G$ must factor through $\Cal$ as $f=g\circ \Cal$ for some homomorphism $g\co \R\to G$.  

At least since the proof of the Eliashberg-Gromov rigidity theorem \cite{E} it has been understood that many features of symplectic geometry survive under appropriate $C^0$ limits; it is natural to ask whether this principle applies to the Calabi homomorphism.  Specifically, we may consider the following question:

\begin{question}\label{mainq} Let $(M,\lambda)$ be an exact symplectic manifold and let $\{H_k\}_{k=1}^{\infty}$ be a sequence of smooth functions all having support in some compact subset $K\subset M$, such that $\xymatrix{\phi_{H_k}^{1}\ar[r]^{C^0}&1_M}$ and such that there is $H\in L^{(1,\infty)}([0,1]\times M)$ such that $\xymatrix{H_k\ar[r]^{L^{(1,\infty)}}&H}$.  Must it be true that $\Cal(\phi_{H_k}^{1})\to 0$?  (In other words, must it be true that $\int_{[0,1]\times M}Hdt\wedge(d\lambda)^n=0$?)
\end{question}

 Here the $L^{(1,\infty)}$ norm on the space of compactly supported smooth functions $C^{\infty}_{c}([0,1]\times M)$ is defined by $\|H\|_{L^{(1,\infty)}}=\int_{0}^{1}\max|H(t,\cdot)|dt$ and\\ $L^{(1,\infty)}([0,1]\times M)$ is the completion of $C^{\infty}_{c}([0,1]\times M)$ with respect to this norm.  It would also be reasonable to replace the $L^{(1,\infty)}$ norm in Question \ref{mainq} by the $C^0$ norm; our choice of the $L^{(1,\infty)}$ norm is mainly for consistency with \cite{Oh16},\cite{OM}.  However it should be emphasized that \emph{some} control over the Hamiltonian functions $H_k$ and not just on the time-one maps $\phi_{H_k}^{1}$ is necessary in Question \ref{mainq}: the reader should not find it difficult to construct sequences of Hamiltonian diffeomorphisms $\phi_{H_k}^{1}$ all having $\Cal(\phi_{H_k}^{1})=1$ that $C^0$-converge to the identity by taking the supports of the $H_k$ to be small (see also Example \ref{grid} below).  This need for something like $C^0$ control on the Hamiltonian functions is consistent with other aspects of the theory of $C^0$ Hamiltonian dynamics as in \cite{OM}.

In \cite{OM}, the authors introduce the group $Hameo(M,d\lambda)$ of ``Hamiltonian homeomorphisms'' of the symplectic manifold $(M,d\lambda)$, and prove that it is a normal subgroup of the group $Sympeo^c(M,d\lambda)$ of compactly supported symplectic homeomorphisms of $(M,d\lambda)$, \emph{i.e.} of the $C^0$-closure  of the group of compactly supported symplectic diffeomorphisms in the group of compactly supported homeomorphisms of $M$.  As follows from a straightforward generalization of the discussion in \cite[Section 7]{Oh10}, an affirmative answer to Question \ref{mainq} would imply that $Hameo(M,d\lambda)$ is  a proper subgroup of $Sympeo^c(M,d\lambda)$, and hence that the latter group is not simple.  This is particularly interesting for $(M,d\lambda)$ equal to the open two-dimensional disk $D^2$ with its standard symplectic form, in which case $Sympeo^c(M,d\lambda)$ coincides with the compactly supported area-preserving homeomorphism group of $D^2$, whose simplicity or non-simplicity is a longstanding open problem.  In this case Question \ref{mainq} is equivalent to a slightly stronger version of \cite[Conjecture 6.8]{Oh10}, stating that $\Cal$ extends continuously to a homomorphism $Hameo(D^2,dx\wedge dy)\to\R$.   (To obtain Oh's conjecture precisely one would strengthen the hypothesis of Question \ref{mainq} to the statement that the isotopies $\{\phi_{k}^{t}\}_{t\in[0,1]}$ converge uniformly as $k\to\infty$ to some loop of homeomorphisms, instead of just assuming that their time-one maps converge to the identity.)

The main purpose of this note is to generalize a result of \cite{Oh16} related to Question \ref{mainq}, which we will recall presently after setting up Oh's notation and terminology.  Given our exact symplectic manifold
$(M,\lambda)$, let us write $\Lambda=\lambda\oplus(-\lambda)\in \Omega^1(M\times M)$, so that $(M\times M,\Lambda)$ is likewise an exact symplectic manifold.  By the Darboux-Weinstein Theorem, there is a symplectomorphism $\Psi\co \mathcal{U}_{\Delta}\to\mathcal{V}$ where $\mathcal{U}_{\Delta}$ is a neighborhood of the diagonal $\Delta\subset M\times M$ and $\mathcal{V}$ is a neighborhood of the zero-section in $T^{*}\Delta$ which is equipped with the symplectic form $-d\theta_{can}$ where $\theta_{can}\in \Omega^1(T^*\Delta)$ is the canonical one-form, with $\Psi|_{\Delta}$ equal to the inclusion of $\Delta$ as the zero-section. We fix such a Darboux-Weinstein chart $\Psi$. If $\phi\co M\to M$ is a compactly supported symplectic diffeomorphism, then its graph $\Gamma_{\phi}=\{(\phi(x),x)|x\in M\}$ is a Lagrangian submanifold of $(M\times M,d\Lambda)$, and if $\phi$ is sufficiently $C^0$-close to $1_M$ then $\Gamma_{\phi}$ will lie in the domain $\mathcal{U}_{\Delta}$ of $\Psi$.  One says that $\phi$ is \textbf{$\Psi$-graphical} if additionally the Lagrangian submanifold $\Psi(\Gamma_{\phi})\subset T^*\Delta$ coincides with the image of a section of the cotangent bundle $T^*\Delta\to\Delta$.  In particular this would hold if $\phi$ were assumed to be $C^1$-close to the identity; on the other hand it is possible for a diffeomorphism to be $\Psi$-graphical while still being fairly far away from the identity in the $C^1$-sense.  

One formulation of the main result of \cite{Oh16} is the following:
\begin{theorem}\cite[Theorem 1.10]{Oh16}\label{ohthm} For $M=D^2$, let $H_k\co [0,1]\times D^2\to\R$ be a sequence as in Question \ref{mainq}, and assume moreover that all of the diffeomorphisms $\phi_{H_k}^{1}$ are $\Psi$-graphical.  Then $\Cal(\phi_{H_k}^{1})\to 0$.
\end{theorem}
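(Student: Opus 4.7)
The plan is to associate to each $\phi_{H_k}^{1}$ a scalar ``generalized phase function'' $S_k \co D^2 \to \R$ via $\Psi$-graphicality, prove a formula relating $\Cal(\phi)$ to $S$ (up to terms that vanish in the $C^0$ limit), and then use the $C^0$-convergence to force $S_k \to 0$ uniformly. The $L^{(1,\infty)}$ bound on the $H_k$ does not actually enter the argument; only $\phi_{H_k}^{1} \to 1_{D^2}$ in $C^0$ is used. By $\Psi$-graphicality, $\Psi(\Gamma_{\phi_{H_k}^{1}}) \subset T^*\Delta$ is the image of a section which is closed (since $\Gamma_{\phi_{H_k}^{1}}$ is Lagrangian and $\Psi$ is a symplectomorphism), hence equals $dS_k$ for some $S_k \in C^\infty(D^2)$ because $\Delta \cong D^2$ is simply connected; normalize $S_k$ to vanish on the unbounded component of $D^2 \setminus \mathrm{supp}(\phi_{H_k}^{1})$. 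The $C^0$-convergence then translates to Hausdorff convergence $\Gamma_{\phi_{H_k}^{1}} \to \Delta$, which through $\Psi$ becomes Hausdorff convergence of $\mathrm{graph}(dS_k)$ to the zero section, forcing $\sup|dS_k| \to 0$. Combined with the vanishing of $S_k$ near $\partial D^2$ and the bounded diameter of $D^2$, this gives $\|S_k\|_{C^0} \to 0$.

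The main step is the formula relating $\Cal(\phi)$ to $S$. Starting from the classical identity
\begin{equation*}
\Cal(\phi) \;=\; \frac{1}{n+1}\int_{M} F_\phi \,\omega^n,
\end{equation*}
where $F_\phi$ is the unique compactly supported primitive of $\phi^{*}\lambda - \lambda$ (derivable from Cartan's formula applied to $(\phi_{H}^{t})^{*}\lambda$ followed by Stokes), I would express $F_\phi$ via $S$ as follows. Let $G$ be the primitive on $\mathcal{U}_\Delta$ of the closed $1$-form $\Psi^{*}\theta_{can} + \Lambda$, normalized by $G|_\Delta = 0$, and pull back $\Psi^{*}\theta_{can}$ to $M$ along $\iota_\phi(x) = (\phi(x),x)$ in two ways. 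Using $\Psi^{*}\theta_{can} = -\Lambda + dG$ and $\iota_\phi^{*}\Lambda = dF_\phi$ gives $-dF_\phi + d(G\circ\iota_\phi)$; using that $\Psi\circ\iota_\phi$ parameterizes $\mathrm{graph}(dS_k)$, whose canonical $1$-form pulls back tautologically to $dS_k$, gives $d(S_k\circ\pi_\phi)$, with $\pi_\phi \co M \to \Delta$ induced by the projection in $T^{*}\Delta$. Matching the two expressions and invoking the normalizations yields $F_\phi = -S_k\circ\pi_\phi + G\circ\iota_\phi$. A change of variables via $\pi_\phi$ then produces $\Cal(\phi) = -\frac{1}{n+1}\int_{D^2} S\,\omega + R(\phi)$, where the remainder $R(\phi)$ combines the contribution of $\int(G\circ\iota_\phi)\omega^n$ (small because $G|_\Delta = 0$ and $\iota_\phi \to \Delta$ in $C^0$) and the Jacobian correction coming from $\pi_\phi \to \mathrm{id}$.

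Combining the two paragraphs gives $\Cal(\phi_{H_k}^{1}) \to 0$. The main obstacle is the identity relating $F_\phi$ to $S$: reconciling the dynamical definition of $\Cal$ (via averaging a generating Hamiltonian) with the static invariant $S$ of the graph requires careful bookkeeping of primitives under the symplectomorphism $\Psi$, and one must verify that the remainder $R(\phi_{H_k}^{1})$ vanishes as $k \to \infty$. This should follow because $\Psi$-graphicality together with $C^0$-convergence forces both $\iota_\phi \to \Delta$ and $\pi_\phi \to \mathrm{id}$ uniformly; the fact that the argument depends on the ambient manifold only through simple connectivity, compact support, and finite area/diameter suggests the method should extend well beyond $D^2$.
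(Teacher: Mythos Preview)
Your approach is essentially the paper's: both pass through the identity $\Cal(\phi)=\frac{1}{n+1}\int_M F_\phi\,\omega^n$, write $F_\phi$ as a generalized phase function plus a correction $G\circ\iota_\phi$ that vanishes on $\Delta$ (the paper's Lemma~2.2/Proposition~2.3 are exactly your $G$ and your identity $F_\phi=-S_k\circ\pi_\phi+G\circ\iota_\phi$), and then bound $\|S_k\|_{C^0}$ by $\mathrm{diam}(K)\cdot\sup|dS_k|\to 0$.

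One caution: your change-of-variables step is both unnecessary and, as justified, not quite right.  The ``Jacobian correction'' is not controlled by the mere $C^0$-convergence $\pi_\phi\to\mathrm{id}$, since the derivative of $\pi_\phi=\pi_\Delta\circ\Psi\circ\iota_\phi$ involves $D\phi$, which is completely uncontrolled under your hypotheses.  Fortunately you never needed this step: once you have $\|S_k\|_{C^0}\to 0$ from paragraph one, the integral $\int_M(S_k\circ\pi_\phi)\,\omega^n$ over the fixed compact set $K$ already tends to zero by the trivial bound $|S_k\circ\pi_\phi|\le\|S_k\|_{C^0}$, and this is exactly how the paper argues (its Corollary~2.4 keeps the integral on $M$ and never changes variables).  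Also note that the paper obtains exactness of the one-form $\alpha_k$ directly from its formula for $S_\phi$ rather than from simple connectivity of $\Delta$, which is why the argument extends verbatim to arbitrary exact $(M,\lambda)$.
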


Thus, at least for $M=D^2$ the answer to Question \ref{mainq} is affirmative under an additional hypothesis; one might then hope to answer Question \ref{mainq} by finding a way to drop this hypothesis.  However we will prove the following generalization of Theorem \ref{ohthm}, which we interpret as suggesting that such a strategy raises more questions than might have been anticipated.

\begin{theorem}\label{mainthm}
Let $(M,\lambda)$ be an exact symplectic manifold, and let $\{\phi_k\}_{k=1}^{\infty}$ be a sequence in $Ham^c(M,d\lambda)$ with all $\phi_k$ generated by Hamiltonians that are supported in a fixed compact subset, such that $\xymatrix{\phi_k\ar[r]^{C^0}&1_M}$.  Assume moreover that each $\phi_k$ is $\Psi$-graphical.  Then $\Cal(\phi_k)\to 0$.
\end{theorem}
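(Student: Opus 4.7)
The plan, guided by the hint in the abstract, is to express $\Cal(\phi)$ as an integral over $M$ of a canonical primitive of $\phi^*\lambda-\lambda$, then to identify this primitive with a generating function of the graph $\Gamma_\phi$ read off from the Weinstein chart $\Psi$, and finally to show that the $C^0$-convergence hypothesis forces both pieces---hence $\Cal(\phi_k)$---to zero.

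The first step is an action formula: for any $\phi\in Ham^c(M,d\lambda)$ there is a unique compactly-supported $F_\phi\co M\to\R$ with $dF_\phi=\phi^*\lambda-\lambda$, and
\[
\Cal(\phi)=\frac{1}{n+1}\int_M F_\phi\,(d\lambda)^n.
\]
If $\phi=\phi_H^1$, Cartan's formula $\mathcal{L}_{X_{H_t}}\lambda = d(\iota_{X_{H_t}}\lambda+H_t)$ shows that $F_\phi = \int_0^1(\phi_H^t)^*(\iota_{X_{H_t}}\lambda+H_t)\,dt$ is such a primitive and is automatically supported in $K$ (since the flow fixes points outside $K$). Using $(\phi_H^t)^*(d\lambda)^n=(d\lambda)^n$, the pointwise identity $\iota_{X_{H_t}}\lambda\cdot(d\lambda)^n = n\,\lambda\wedge(d\lambda)^{n-1}\wedge dH_t$ (read off from $\iota_{X_{H_t}}(\lambda\wedge(d\lambda)^n)=0$), and integration by parts, the integral of $F_\phi(d\lambda)^n$ collapses to $(n+1)\Cal(\phi)$.

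The second step, where the geometric content lies, is to rewrite $F_\phi$ in terms of data coming from the chart $\Psi$. Since $\Psi$ is a symplectomorphism, $\Psi^*(-\theta_{can})-\Lambda$ is a closed $1$-form on $\mathcal{U}_\Delta$ that moreover restricts to zero on $\Delta$, so there is a canonical function $\rho\co\mathcal{U}_\Delta\to\R$ with $\rho|_\Delta=0$ and $\Psi^*(-\theta_{can})=\Lambda+d\rho$. For $\phi\in Ham^c(M,d\lambda)$ that is $\Psi$-graphical with $\Gamma_\phi\subset\mathcal{U}_\Delta$, parametrize the graph by $\gamma_\phi(x)=(\phi(x),x)$, set $s_\phi=\pi_\Delta\circ\Psi\circ\gamma_\phi\co M\to\Delta$ (a diffeomorphism by $\Psi$-graphicality that agrees with the canonical identification outside $K$), and let $\alpha_\phi$ denote the closed $1$-form on $\Delta$ whose image is $\Psi(\Gamma_\phi)$. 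Pulling $\Lambda=\Psi^*(-\theta_{can})-d\rho$ back via $\gamma_\phi$ gives $dF_\phi=-s_\phi^*\alpha_\phi-d(\rho\circ\gamma_\phi)$, which first forces $\alpha_\phi$ to be \emph{exact}, $\alpha_\phi=df_\phi$ with $f_\phi$ compactly supported (the ``generalized phase function'' of the hint), and then, after matching the compactly-supported normalizations, yields the identification
\[
F_\phi=-(f_\phi\circ s_\phi)-(\rho\circ\gamma_\phi).
\]

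The third step is the $C^0$ estimate. For large $k$, the $C^0$ convergence $\phi_k\to 1_M$ together with the fixed compact supports puts $\Gamma_{\phi_k}$ inside $\mathcal{U}_\Delta$ and gives Hausdorff convergence $\Psi(\Gamma_{\phi_k})\to$ zero section of $T^*\Delta$, equivalently $\|df_{\phi_k}\|_{C^0}\to 0$. Since each $f_{\phi_k}$ vanishes outside a fixed compact subset of $\Delta$, integrating $df_{\phi_k}$ along paths from the boundary inward gives $\|f_{\phi_k}\|_{C^0}\to 0$. Similarly $\gamma_{\phi_k}$ converges uniformly on $K$ to the diagonal embedding, so continuity of $\rho$ with $\rho|_\Delta=0$ yields $\|\rho\circ\gamma_{\phi_k}\|_{C^0}\to 0$. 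The formula of Step 2 therefore gives $\|F_{\phi_k}\|_{C^0}\to 0$ with supports in a fixed compact set, and Step 1 concludes $\Cal(\phi_k)\to 0$. The main obstacle is Step 2: in particular, deducing the exactness of $\alpha_\phi$ from the Hamiltonian hypothesis (the $\Psi$-graphical condition alone only gives closedness) and pinning down the constants in the various primitives so that the identification of $F_\phi$ with $-(f_\phi\circ s_\phi)-(\rho\circ\gamma_\phi)$ holds on the nose rather than merely up to a constant.
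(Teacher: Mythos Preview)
Your proposal is correct and follows essentially the same approach as the paper: your $F_\phi$, $\rho$, and $f_\phi$ are the paper's $f_{\lambda,\phi}$, $R\circ\Psi$, and $-S_\phi\circ\alpha_\phi$, your identity $F_\phi=-(f_\phi\circ s_\phi)-(\rho\circ\gamma_\phi)$ is precisely the paper's formula $f_{\lambda,\phi}=(S_\phi-R)\circ\Psi\circ(\phi\times 1_M)$ read on $M$, and the endgame---bounding the phase function by integrating its differential along paths from outside the fixed compact support---is identical. One small technical point you glossed over: the existence of $\rho$ with $\rho|_\Delta=0$ needs $H^1(\mathcal{U}_\Delta,\Delta)=0$, which the paper arranges by first shrinking $\mathcal{U}_\Delta$ so that $\Delta$ is a deformation retract.
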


We emphasize that no assumption is made on the convergence of the Hamiltonians $H_k$ generating the $\phi_k$.  On the other hand, if one drops the graphicality hypothesis then an assumption similar to that in Question \ref{mainq} is certainly needed, see Example \ref{grid}.  This author believes that a major impediment to answering Question \ref{mainq} affirmatively is the current lack of a precise idea of what role the convergence of the Hamiltonian functions might play in the convergence of the Calabi invariants; Theorem \ref{mainthm} shows that this role only becomes essential when the $\phi_k$ are no longer graphical.

\begin{ex}\label{grid} On an arbitrary $2n$-dimensional exact symplectic manifold $(M,\lambda)$, for a sufficiently small $\delta>0$ we may consider a symplectically embedded copy of the cube $C^{2n}(\delta)=[0,\delta]^{2n}\subset M$ (with $C^{2n}(\delta)$ carrying the standard symplectic form $\sum_idx_i\wedge dy_i$).  Define a sequence of smooth functions $F_k\co M\to\R$ as follows.  Divide 
$C^{2n}(\delta)$ into $k^{2n}$ equal subcubes $C_{\vec{i}}^{(k)}$ for $\vec{i}=(i_1,\ldots,i_{2n})\in \{1,\ldots,k\}^{2n}$, by taking \[ C_{\vec{i}}^{(k)}=\prod_{j=1}^{2n}\left[\frac{i_j-1}{k}\delta,\frac{i_j}{k}\delta\right],\] and take $F_k$ to be a smooth function which is supported in the union of the interiors of the $C_{\vec{i}}^{(k)}$, obeys $0\leq F_k\leq 1$ everywhere, and which, for each $\vec{i}$, is equal to $1$ on a subset of $C_{\vec{i}}^{(k)}$ having measure at least $(1-1/k)(\delta/k)^{2n}$.  Define $H_k\co [0,1]\times M\to\R$ by $H_k(t,x)=F_k(x)$.

Evidently the sequence $H_k$ converges to the indicator function of $[0,1]\times C^{2n}(\delta)$ in every $L^p$ norm for $p<\infty$.  In particular $\Cal(\phi_{H_k}^{1})=\int_{M}H_k dt\wedge(d\lambda)^n\to \delta^{2n}>0$ as $k\to\infty$.  Meanwhile each $\phi_{H_k}^{1}$ acts as the identity on $M\setminus C^{2n}(\delta)$ and maps each of the subcubes $C_{\vec{i}}^{(k)}$ to themselves; since these subcubes have diameter $\sqrt{2n}\delta/k$ it follows that $\xymatrix{\phi_{H_k}^{1}\ar[r]^{C^0}&1_M}$. 
\end{ex}

Thus, if the answer to Question \ref{mainq} is to be affirmative, then its assumptions must be rather sharp: it is not sufficient for the Hamiltonian functions to converge in $L^p$ for any finite $p$, or to be uniformly bounded---one would need uniform convergence in the space variable.  Given the formula (\ref{calH}) for the Calabi homomorphism, one might naively have expected that $L^1$ convergence would be sufficient, but this is not the case.  

\begin{remark}
The uniqueness theorem for the Hamiltonians that generate Hamiltonian homeomorphisms \cite{V},\cite{BS} is somewhat reminiscent of Question \ref{mainq}.  Indeed this theorem (when $M$ is noncompact as it is in our case) can be phrased as stating that if $H_k\co [0,1]\times M\to \R$ are compactly supported smooth functions such that $\xymatrix{H_k\ar[r]^{L^{(1,\infty)}}&H}$ and if $\xymatrix{\phi_{H_k}^{t}\ar[r]^{C^0}& 1_M}$ uniformly in $t$, then $H\equiv 0$.  Thus, in comparison to this uniqueness theorem, Question \ref{mainq} asks whether one can obtain the weaker conclusion that $\int_MHdt\wedge(d\lambda)^n=0$ from the weaker hypothesis that only the time-one map $\xymatrix{\phi_{H_k}^{1}\ar[r]^{C^0}&1_M}$.  Note that Example \ref{grid} shows that the uniqueness theorem would fail to hold if we instead were to only assume that the functions $H_k$ converge in $L^p$ for some finite $p$, or that the $H_k$ are uniformly bounded, since in Example \ref{grid} we clearly have $\xymatrix{\phi_{H_k}^{t}\ar[r]^{C^0}&1_M}$ uniformly in $t$.  Thus the (potential) sharpness of the hypotheses in Question \ref{mainq} has some precedent in prior results. 
\end{remark}

\begin{remark}
En route to proving Theorem \ref{mainthm} we will prove a related result, Corollary \ref{general}, that does not make any graphicality assumptions and suggests a general viewpoint on Question \ref{mainq}.  Namely, given that $\xymatrix{\phi_{H_k}^{1}\ar[r]^{C^0}&1_M}$, Corollary \ref{general} shows that the statement that $\Cal(\phi_{H_k}^{1})\to 0$ is equivalent to the statement that the integrals of suitable pullbacks of \emph{generalized phase functions} $S_k$
 for the Lagrangian submanifolds $\Psi(\Gamma_{\phi_{H_k}^{1}})$ converge to zero.  
Here (as in \cite{BW}) a generalized phase function for a Lagrangian submanifold $L\subset T^*\Delta$ is a compactly supported smooth function $S_L\co L\to \R$ with $dS_L=\theta_{can}|_L$.  One way of constructing a generalized phase function for $L$ is to begin  with a generating function $S\co M\times\R^N\to \R$ (as in \cite{Sik}) with fiber critical set $\Sigma_S\subset M\times \R^N$ and canonical embedding $\iota_L\co\Sigma_S\to L$, and then define $S_L=S\circ \iota_{L}^{-1}$.  Thus in view of Corollary \ref{general}, Question \ref{mainq} leads to a question about the relationship between the behavior of a Hamiltonian function $H$ and that of the generating function of the graph of the time-one map of $H$.
\end{remark}

The following section contains the proofs of Corollary \ref{general} and Theorem \ref{mainthm}

\section{The Calabi invariant and generalized phase functions}

 As in the introduction, we work in a fixed exact symplectic manifold $(M,\lambda)$, and we fix a Darboux-Weinstein chart $\Psi\co \mathcal{U}_{\Delta}\to\mathcal{V}\subset T^*\Delta$ where $\Delta\subset (M\times M,d\Lambda)$ is the diagonal (and $\Lambda=\pi_{1}^{*}\lambda-\pi_{2}^{*}\lambda\in\Omega^1(M\times M)$ where $\pi_1,\pi_2\co M\times M\to M$ are the projections to the two factors).  The graph of $\phi$ is $\Gamma_{\phi}=\{(\phi(x),x)|x\in M\}\subset M\times M$.  (Throughout the paper our sign and ordering conventions are chosen to be consistent with \cite{Oh16}.) Assuming that $\Gamma_{\phi}\subset \mathcal{U}_{\Delta}$ we let \[ L_{\phi}=\Psi(\Gamma_{\phi})\subset T^*\Delta.\] Thus $\phi$ is graphical in the sense of the introduction if and only if there is $\alpha\in \Omega^1(\Delta)$ such that $L_{\phi}=\{(x,\alpha_x)|x\in \Delta\}$.  Such an $\alpha$ is necessarily closed, and we will argue below that it is exact. (This is not completely obvious since, if $\phi=\phi_{H}^{1}$, we allow the possibility that some $\Gamma_{\phi_{H}^{t}}$ is not contained in $\mathcal{U}_{\Delta}$.)
 
The following is well-known.
\begin{lemma}\label{f} Let $\phi\in Ham^c(M,d\lambda)$.  Then there is $f_{\lambda,\phi}\in C^{\infty}_{c}(M)$ such that $df_{\lambda,\phi}=\phi^{*}\lambda-\lambda$.  Moreover \begin{equation}\label{calf} \Cal(\phi)=\frac{1}{n+1}\int_{M}f_{\lambda,\phi}(d\lambda)^n.\end{equation}
\end{lemma}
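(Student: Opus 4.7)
The plan is to pick any Hamiltonian $H$ generating $\phi$ and construct $f_{\lambda,\phi}$ by integrating along the isotopy. Using Cartan's magic formula together with the defining identity $\iota_{X_{H_t}}d\lambda = dH_t$, I compute
\[ \frac{d}{dt}(\phi_{H}^{t})^{*}\lambda = (\phi_{H}^{t})^{*}\mathcal{L}_{X_{H_t}}\lambda = (\phi_{H}^{t})^{*}\bigl(d\iota_{X_{H_t}}\lambda + \iota_{X_{H_t}}d\lambda\bigr) = d\bigl((\phi_{H}^{t})^{*}(\iota_{X_{H_t}}\lambda + H_t)\bigr).\]
Integrating from $0$ to $1$ gives $\phi^{*}\lambda - \lambda = df_{\lambda,\phi}$ where
\[ f_{\lambda,\phi} = \int_{0}^{1}(\phi_{H}^{t})^{*}(\iota_{X_{H_t}}\lambda + H_t)\,dt,\]
which is compactly supported since each $H_t$ and $X_{H_t}$ is.

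For the formula for $\Cal(\phi)$, the strategy is to plug in the above formula for $f_{\lambda,\phi}$, interchange the order of integration over $t$ and $M$, and then use the fact that $(\phi_{H}^{t})^{*}(d\lambda)^{n} = (d\lambda)^{n}$ to change variables and eliminate the pullbacks. The computation then reduces to evaluating
\[ \int_{M}(\iota_{X_{H_t}}\lambda)(d\lambda)^n + \int_{M}H_t(d\lambda)^{n}.\]
For the first integrand I would use $\iota_{X_{H_t}}(\lambda\wedge(d\lambda)^n)=0$ (since this is a $(2n{+}1)$-form on a $2n$-manifold) to obtain $(\iota_{X_{H_t}}\lambda)(d\lambda)^n = n\,\lambda\wedge dH_t\wedge(d\lambda)^{n-1}$, and then integrate by parts via the identity $d(H_t\lambda\wedge(d\lambda)^{n-1}) = dH_t\wedge\lambda\wedge(d\lambda)^{n-1} + H_t(d\lambda)^n$, whose $M$-integral vanishes by Stokes (compact support). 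This yields
\[ \int_{M}(\iota_{X_{H_t}}\lambda)(d\lambda)^n = n\int_{M}H_t(d\lambda)^n,\]
so the full integrand combines to $(n+1)H_t(d\lambda)^n$, and integration over $t$ recovers $(n+1)\Cal(\phi)$ via formula (\ref{calH}).

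I do not expect a serious obstacle here; the lemma is classical. The only mild subtlety is that the construction a priori uses the isotopy $\{\phi_H^t\}$, whereas the notation $f_{\lambda,\phi}$ suggests dependence only on $\phi$ and $\lambda$. This is not needed for the statement, but follows post hoc since any two compactly supported primitives of the compactly supported form $\phi^{*}\lambda-\lambda$ differ by a locally constant compactly supported function (which is pinned down in the relevant bounded components once one observes that the value of $\frac{1}{n+1}\int_M f(d\lambda)^n$ forced by (\ref{calf}) is isotopy-independent).
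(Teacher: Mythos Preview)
Your proof is correct and follows essentially the same route as the paper: the construction of $f_{\lambda,\phi}$ via Cartan's formula is identical to the paper's, and for (\ref{calf}) the paper simply cites Banyaga for ``a standard calculation involving repeated applications of Stokes' theorem,'' which is exactly what you carry out explicitly. Your closing remark on dependence only on $\phi$ is extraneous to the lemma as stated (existence suffices), so its slightly circular phrasing does no harm.
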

\begin{proof} Assume that $\phi=\phi_{H}^{1}$ for some $H\in C^{\infty}_{c}([0,1]\times M)$.  Then \begin{align*} \phi^*\lambda-\lambda&=\int_{0}^{1}\frac{d}{dt}\phi_{H}^{t*}\lambda dt=\int_{0}^{1}\phi_{H}^{t*}\left(d\iota_{X_{H_t}}\lambda+\iota_{X_{H_t}}d\lambda\right)dt
\\ &= d\left(\int_{0}^{1}\left(\iota_{X_{H_t}}\lambda+H(t,\cdot)\right)\circ\phi_{H}^{t}dt\right) \end{align*} so we may take $f_{\lambda,\phi}=
\int_{0}^{1}\left(\iota_{X_{H_t}}\lambda+H(t,\cdot)\right)\circ\phi_{H}^{t}dt$.    The formula (\ref{calf}) is then a standard calculation involving repeated applications of Stokes' theorem, see \emph{e.g.} \cite[Proposition II.4.3]{B}.
\end{proof}

\begin{lemma}\label{R}
Assume that the domain of the Darboux-Weinstein chart $\Psi\co \mathcal{U}_{\Delta}\to\mathcal{V}$ has $\Delta$ as a deformation retract.  Then there is a smooth function $R\co \mathcal{V}\to \R$ such that $R|_{\Delta}\equiv 0$ and $-\theta_{can}=(\Psi^{-1})^{*}\Lambda+dR$.
\end{lemma}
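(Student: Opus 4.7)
The plan is to show that the $1$-form $\beta = -\theta_{can} - (\Psi^{-1})^{*}\Lambda$ on $\mathcal{V}$ is closed and vanishes on $\Delta$, and then construct a primitive that itself vanishes on $\Delta$ via a standard homotopy argument using the deformation retract.

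First I would verify closedness: since $\Psi$ is a symplectomorphism from $(\mathcal{U}_{\Delta},d\Lambda)$ to $(\mathcal{V},-d\theta_{can})$, one has $(\Psi^{-1})^{*}(d\Lambda)=-d\theta_{can}$, which is exactly the statement $d\beta=0$. Next I would check that $\beta|_{\Delta}=0$. For this, note that $\theta_{can}|_{\Delta}=0$ because $\Delta$ is the zero-section of $T^{*}\Delta$. On the other hand, $\Psi|_{\Delta}$ is the inclusion of $\Delta$ as the zero-section, so $(\Psi^{-1})^{*}\Lambda|_{\Delta}$ equals $\Lambda|_{\Delta}$, and under the identification $\Delta\cong M$ both projections $\pi_{1},\pi_{2}\co M\times M\to M$ restrict to the identity, whence $\Lambda|_{\Delta}=\lambda-\lambda=0$.

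To produce the primitive $R$, I would use the deformation retract $h\co [0,1]\times \mathcal{V}\to\mathcal{V}$ from $\mathcal{V}$ onto $\Delta$; write $h_t$ for its time-$t$ map, so that $h_1=\mathrm{id}_{\mathcal{V}}$, $h_0=\iota\circ r$ with $r\co \mathcal{V}\to\Delta$ the retraction and $\iota\co \Delta\hookrightarrow\mathcal{V}$ the inclusion, and $h_t|_{\Delta}=\mathrm{id}_{\Delta}$ for all $t$. Letting $X_t=\frac{d}{dt}h_t$ be the time-dependent generating vector field, Cartan's formula combined with $d\beta=0$ gives
\begin{equation*}
\frac{d}{dt}h_{t}^{*}\beta = h_{t}^{*}\bigl(d\,\iota_{X_t}\beta+\iota_{X_t}d\beta\bigr) = d\,h_{t}^{*}(\iota_{X_t}\beta),
\end{equation*}
and integrating from $0$ to $1$ yields $\beta-h_{0}^{*}\beta=dR$ where $R=\int_{0}^{1}h_{t}^{*}(\iota_{X_t}\beta)\,dt$. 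Since $h_{0}^{*}\beta=r^{*}\iota^{*}\beta=r^{*}(\beta|_{\Delta})=0$ by the previous paragraph, we conclude $\beta=dR$, i.e. $-\theta_{can}=(\Psi^{-1})^{*}\Lambda+dR$.

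Finally I would verify $R|_{\Delta}\equiv 0$. For any point $p\in\Delta$, the relation $h_t(p)=p$ for all $t$ forces $X_t(p)=0$, so the integrand $h_{t}^{*}(\iota_{X_t}\beta)$ vanishes at $p$ for every $t$, giving $R(p)=0$. No step here is particularly delicate; the main thing to be careful about is making sure the deformation retract is chosen so that it is stationary on $\Delta$ (which, for a retraction onto $\Delta$, is automatic), so that both the homotopy formula yields a primitive that actually vanishes on $\Delta$ and the boundary term $h_{0}^{*}\beta$ genuinely vanishes.
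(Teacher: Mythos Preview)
Your argument is correct and follows essentially the same route as the paper: both verify that the relevant one-form is closed and restricts to zero on $\Delta$, then invoke the deformation retract to produce a primitive vanishing on $\Delta$---the paper phrases this as the vanishing of the relative de Rham group $H^1(\mathcal{U}_{\Delta},\Delta)$, while you unpack that step explicitly via the Cartan homotopy formula. One small caution: the condition $h_t|_{\Delta}=\mathrm{id}_{\Delta}$ for all $t$ (a \emph{strong} deformation retraction) is not automatic from the definition of a deformation retraction, though it is easily arranged here (e.g.\ by fiberwise scaling in $T^{*}\Delta$ after possibly shrinking $\mathcal{V}$).
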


\begin{proof} Since $\Psi$ is a symplectomorphism $(\mathcal{U}_{\Delta},d\Lambda)\to(\mathcal{V},-d\theta_{can})$, we have $d(\Psi^{*}\theta_{can}+\Lambda)=0$.  Of course, since $\Lambda|_{\Delta}=0$ while $\theta_{can}$ vanishes on the zero-section of $T^*\Delta$, and since $\Psi$ maps $\Delta$ to the zero-section, we have $(\Psi^*\theta_{can}+\Lambda)|_{\Delta}=0$.  So $\Psi^{*}\theta_{can}+\Lambda$ represents a class in the relative de Rham cohomology $H^1(\mathcal{U}_{\Delta},\Delta)$, which is trivial by the assumption that $\Delta$ is a deformation retract of 
$\mathcal{U}_{\Delta}$.  So there is $g\in C^{\infty}(\mathcal{U}_{\Delta})$ with $g|_{\Delta}=0$ such that $\Psi^{*}\theta_{can}+\Lambda=dg$.  So the lemma holds with $R=-g\circ\Psi^{-1}$.
\end{proof}

Putting together the two preceding lemmas gives the following.

\begin{prop}\label{sphi} Assume that $\phi\in Ham^c(M,d\lambda)$ has $\Gamma_{\phi}\subset\mathcal{U}_{\Delta}$ where $\Psi\co \mathcal{U}_{\Delta}\to\mathcal{V}$ is a Darboux-Weinstein chart whose domain $\mathcal{U}_{\Delta}$ has $\Delta$ as a deformation retract.  Let $f_{\lambda,\phi}$
be as in Lemma \ref{f}, and let $R$ be as in Lemma \ref{R}.  Where $L_{\phi}=\Psi(\Gamma_{\phi})$, define $S_{\phi}\co L_{\phi}\to\R$ by \begin{equation}\label{Sformula} S_{\phi}=R+f_{\lambda,\phi}\circ\pi_2\circ\Psi^{-1}.\end{equation}  Then $S_{\phi}\co L_{\phi}\to M$ is a compactly supported smooth function satisfying $dS_{\phi}=-\theta_{can}|_{L_{\phi}}$.
\end{prop}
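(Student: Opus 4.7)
The plan is to combine the two preceding lemmas essentially by pulling back the identity of Lemma \ref{R} to $L_\phi$ and recognizing the pullback of $\Lambda$ as an exact form via Lemma \ref{f}. Restricting the equation $-\theta_{can} = (\Psi^{-1})^*\Lambda + dR$ from Lemma \ref{R} to $L_\phi$ gives
\[
-\theta_{can}|_{L_\phi} = (\Psi^{-1})^*\Lambda\bigr|_{L_\phi} + dR|_{L_\phi},
\]
so the whole task reduces to showing that $(\Psi^{-1})^*\Lambda|_{L_\phi} = d\bigl(f_{\lambda,\phi}\circ\pi_2\circ\Psi^{-1}\bigr|_{L_\phi}\bigr)$.

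The key step is to parametrize $\Gamma_\phi$ by the diffeomorphism $j\co M\to\Gamma_\phi$, $j(x)=(\phi(x),x)$. Then $\pi_1\circ j=\phi$ and $\pi_2\circ j=\mathrm{id}_M$, so directly from $\Lambda=\pi_1^*\lambda-\pi_2^*\lambda$ one has $j^*\Lambda=\phi^*\lambda-\lambda$, which by Lemma \ref{f} equals $df_{\lambda,\phi}$. Since $f_{\lambda,\phi}=(f_{\lambda,\phi}\circ\pi_2)\circ j$, this gives $j^*\Lambda=j^*d(f_{\lambda,\phi}\circ\pi_2)$, and because $j$ is a diffeomorphism onto $\Gamma_\phi$ I can conclude that $\Lambda|_{\Gamma_\phi}=d\bigl((f_{\lambda,\phi}\circ\pi_2)|_{\Gamma_\phi}\bigr)$. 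Pulling back this identity by $\Psi^{-1}\co L_\phi\to\Gamma_\phi$ and substituting yields the desired formula $dS_\phi=-\theta_{can}|_{L_\phi}$.

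It remains to verify the compact support of $S_\phi$. Since $\phi$ has compact support and $f_{\lambda,\phi}\in C^\infty_c(M)$, there exists a compact set $K\subset M$ outside of which both $\phi=\mathrm{id}$ and $f_{\lambda,\phi}=0$. For a point $p\in L_\phi$ with $\pi_2(\Psi^{-1}(p))\notin K$ one has $\Psi^{-1}(p)=(x,x)\in\Delta$ for some $x\in M\setminus K$, hence $p$ lies on the zero section of $T^*\Delta$; Lemma \ref{R} then gives $R(p)=0$, while $f_{\lambda,\phi}\circ\pi_2\circ\Psi^{-1}$ also vanishes at $p$. Thus $S_\phi$ vanishes outside the compact set $L_\phi\cap\Psi(\pi_2^{-1}(K))$.

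I do not expect a serious obstacle here: the whole proposition is formal once the previous lemmas are in hand. The one thing that needs a moment of care is the identification of forms on $\Gamma_\phi$ and $L_\phi$ with their pullbacks under the various natural parametrizations, and keeping the role of $\pi_2$ (rather than $\pi_1$) straight so that the formula for $S_\phi$ genuinely descends to $L_\phi$ and matches the sign conventions inherited from \cite{Oh16}.
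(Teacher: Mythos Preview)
Your proposal is correct and follows essentially the same route as the paper: restrict the identity from Lemma~\ref{R} to $L_\phi$, identify $\Lambda|_{\Gamma_\phi}$ with $d(f_{\lambda,\phi}\circ\pi_2)|_{\Gamma_\phi}$ via Lemma~\ref{f}, and check compact support using that $L_\phi$ agrees with the zero section outside $\Psi(j(K))$. The only cosmetic differences are that you phrase the key identity through the explicit parametrization $j(x)=(\phi(x),x)$ whereas the paper uses the equivalent relation $\phi\circ\pi_2|_{\Gamma_\phi}=\pi_1|_{\Gamma_\phi}$, and you treat the differential computation before compact support rather than after.
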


\begin{proof} That $S_{\phi}$ is compactly supported follows from the facts that $L_{\phi}$ coincides outside of a compact subset with $\Delta$, on which $R$ vanishes identically, that $\pi_2\circ\Psi^{-1}$ maps $L_{\phi}$ diffeomorphically to $M$, and that $f_{\lambda,\phi}$ is compactly supported. 

By the defining property of $R$ we have \begin{equation}\label{Rtheta} (dR+\theta_{can})|_{L_{\phi}}=(-\Psi^{-1*}\Lambda)|_{L_{\phi}}=-\Psi^{-1*}(\Lambda|_{\Gamma_{\phi}}).\end{equation}  Meanwhile since (by the definition of $\Gamma_{\phi}$ as $\{(\phi(x),x)|x\in M\}$) we have $\phi\circ\pi_2|_{\Gamma_{\phi}}=\pi_1|_{\Gamma_{\phi}}$, we see that \begin{equation}\label{df} d\left(f_{\lambda,\phi}\circ\pi_2|_{\Gamma_{\phi}}\right)=(\pi_2|_{\Gamma_{\phi}})^{*}(\phi^*\lambda-\lambda)=(\pi_{1}^{*}\lambda-\pi_{2}^{*}\lambda)|_{\Gamma_{\phi}}=\Lambda|_{\Gamma_{\phi}}.\end{equation}  Combining (\ref{Rtheta}) and (\ref{df}) gives \[ \Psi^{*}\left((dR+\theta_{can})|_{L_{\phi}}\right)=-d\left(f_{\lambda,\phi}\circ\pi_2|_{\Psi^{-1}(L_{\phi})}\right),\] from which the result follows immediately.
\end{proof}

The key point now is that the conditions that $S_{\phi}$ be compactly supported and that $dS_{\phi}=-\theta_{can}|_{L_{\phi}}$ uniquely determine the smooth function $S_{\phi}\co L_{\phi}\to \R$.  A function satisfying these properties is (perhaps after a sign reversal) sometimes called a ``generalized phase function;'' the formula (\ref{Sformula}) for such a function together with the formula (\ref{calf}) thus relate generalized phase functions to the Calabi homomorphism.  The relation is especially simple in the $C^0$-small, graphical case, but first we note a consequence that does not require graphicality.

\begin{cor}\label{general} Assume that $\{\phi_k\}_{k=1}^{\infty}$ is a sequence in $Ham(M,d\lambda)$ with each $\phi_k$ supported in a fixed compact set, such that $\xymatrix{\phi_k\ar[r]^{C^0}& 1_M}$.  Construct the Lagrangian submanifolds $L_{\phi_k}=\Psi(\Gamma_{\phi_k})\subset T^*\Delta$ (for sufficiently large $k$) as above, and suppose that $S_k\co L_{\phi_k}\to \R$ are compactly supported smooth functions obeying $dS_k=-\theta_{can}|_{L_{\phi_k}}$.  Then $\Cal(\phi_k)\to 0$ \textbf{if and only if} \begin{equation}\label{Sint} \int_{M}(S_k\circ\Psi\circ(\phi_k\times 1_M))(d\lambda)^n\to 0.\end{equation}
\end{cor}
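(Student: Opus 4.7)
The plan is to reduce the corollary to Proposition \ref{sphi} by invoking the uniqueness of compactly supported primitives for $-\theta_{can}|_{L_{\phi_k}}$, and then to peel off the term involving $R$ using a dominated convergence argument.

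First, I would replace $\mathcal{U}_\Delta$ (if necessary) by a smaller open neighborhood of $\Delta$ that deformation retracts onto $\Delta$, so that Lemma \ref{R} applies and we have a function $R\co \mathcal{V}\to\R$ with $R|_\Delta\equiv 0$ and $-\theta_{can}=(\Psi^{-1})^*\Lambda+dR$. For $k$ large enough that $\Gamma_{\phi_k}\subset\mathcal{U}_\Delta$, Proposition \ref{sphi} gives a compactly supported smooth function $S_{\phi_k}\co L_{\phi_k}\to\R$ with $dS_{\phi_k}=-\theta_{can}|_{L_{\phi_k}}$, namely
\[
S_{\phi_k}=R+f_{\lambda,\phi_k}\circ\pi_2\circ\Psi^{-1}.
\]
Since $S_k$ satisfies the same equation $dS_k=-\theta_{can}|_{L_{\phi_k}}$ and is compactly supported, the difference $S_k-S_{\phi_k}$ is locally constant on $L_{\phi_k}$; because $L_{\phi_k}$ is diffeomorphic to $M$ via $\pi_2\circ\Psi^{-1}$, each component is noncompact, and compact support then forces $S_k=S_{\phi_k}$.

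Next I would compute using the parametrization $x\mapsto\Psi(\phi_k(x),x)$ of $L_{\phi_k}$. Since $\pi_2\circ\Psi^{-1}\circ\Psi\circ(\phi_k\times 1_M)=1_M$, the integrand splits as
\[
S_k\circ\Psi\circ(\phi_k\times 1_M)=R\circ\Psi\circ(\phi_k\times 1_M)+f_{\lambda,\phi_k},
\]
and combining with the formula $\Cal(\phi_k)=\frac{1}{n+1}\int_M f_{\lambda,\phi_k}(d\lambda)^n$ from Lemma \ref{f} yields
\[
\int_M (S_k\circ\Psi\circ(\phi_k\times 1_M))(d\lambda)^n=\int_M (R\circ\Psi\circ(\phi_k\times 1_M))(d\lambda)^n+(n+1)\Cal(\phi_k).
\]

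Finally, I would argue that $\int_M (R\circ\Psi\circ(\phi_k\times 1_M))(d\lambda)^n\to 0$, which would immediately establish the equivalence in the corollary. Because all $\phi_k$ are supported in a fixed compact $K\subset M$ and $\phi_k\xrightarrow{C^0}1_M$, the points $(\phi_k(x),x)$ lie in a fixed compact neighborhood of $\Delta$ inside $\mathcal{U}_\Delta$, so the integrand is uniformly bounded and supported in $K$. Pointwise, for each $x$ we have $(\phi_k(x),x)\to(x,x)\in\Delta$, hence $\Psi(\phi_k(x),x)$ tends to the zero-section point over $x$, where $R$ vanishes; so the integrand converges pointwise to zero and dominated convergence finishes the argument.

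The main (mild) subtlety is the uniqueness step for $S_k$: one needs to know that a compactly supported primitive of $-\theta_{can}|_{L_{\phi_k}}$ is unique, which requires that each connected component of $L_{\phi_k}$ be noncompact. This follows because $L_{\phi_k}$ is diffeomorphic to $M$ and $M$ is noncompact, but it is worth flagging explicitly. The rest of the proof is a bookkeeping combination of Proposition \ref{sphi}, Lemma \ref{f}, and dominated convergence.
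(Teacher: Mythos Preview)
Your proof is correct and follows essentially the same route as the paper: invoke the uniqueness of a compactly supported primitive of $-\theta_{can}|_{L_{\phi_k}}$ to identify $S_k$ with the $S_{\phi_k}$ of Proposition~\ref{sphi}, rewrite the integrand as $R\circ\Psi\circ(\phi_k\times 1_M)+f_{\lambda,\phi_k}$, and show the $R$-term integrates to zero. The only cosmetic difference is that the paper observes $\max|R\circ\Psi\circ(\phi_k\times 1_M)|\to 0$ directly (uniform continuity of $R\circ\Psi$ on a compact neighborhood of the diagonal, combined with $C^0$-convergence), whereas you use dominated convergence; both arguments are valid and yield the same conclusion.
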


\begin{proof} By shrinking the domain of $\Psi\co \mathcal{U}_{\Delta}\to\mathcal{V}$ and perhaps removing an initial segment of the sequence $\{\phi_k\}_{k=1}^{\infty}$ we may assume that $\mathcal{U}_{\Delta}$ has $\Delta$ as a deformation retract and that each $\Gamma_{\phi_k}\subset\mathcal{U}_{\Delta}$ so that we are in the setting of Proposition \ref{sphi}.  

Note that $\pi_2\circ\Psi^{-1}\co L_{\phi_{k}}\to M$ is a diffeomorphism with inverse $\Psi\circ(\phi_k\times 1_M)$.  
So (\ref{Sformula}) gives \begin{equation}\label{fsr} f_{\lambda,\phi_k}=(S_k-R)\circ\Psi\circ(\phi_k\times 1_M).\end{equation}  But $R$ is a smooth function, independent of $k$, which vanishes along the zero section of $T^*\Delta$, so $R\circ\Psi$ vanishes along the diagonal $\Delta\subset M\times M$.  So the assumption that $\xymatrix{\phi_k\ar[r]^{C^0}&1_M}$ implies that $\max|R\circ\Psi\circ(\phi_k\times 1_M)|\to 0$ as $k\to\infty$, and the assumption there is a compact set simultaneously containing the supports of all of the $\phi_k$ implies that the support of each $R\circ\Psi\circ(\phi_k\times 1_M)$   is contained in this same compact set.  Hence $\int_M R\circ\Psi\circ(\phi_k\times 1_M)(d\lambda)^n\to 0$, in view of which the corollary follows directly from (\ref{fsr}) and (\ref{calf}).
\end{proof}

\begin{proof}[Proof of Theorem \ref{mainthm}] 
We work in the same setting as Corollary \ref{general}, with the additional assumption that each $\phi_k$ is $\Psi$-graphical.  As noted at the start of the proof of Corollary \ref{general}, we may assume that $\mathcal{U}_{\Delta}$ deformation retracts to $\Delta$.  

Since $\phi_k$ is $\Psi$-graphical, let $\alpha_k\in \Omega^1(\Delta)$ have the property that $L_{\phi_k}$ is the image of $\alpha_k$ (when the latter is viewed as a map $\Delta\to T^*\Delta$).  Recall that, by the definition of the canonical one-form $\theta_{can}$, $\alpha_{k}^{*}\theta_{can}=\alpha_k$.  Denoting by $\pi_{\Delta}\co T^*\Delta\to\Delta$ the bundle projection, $\pi_{\Delta}|_{L_{\phi_k}}\co L_{\phi_k}\to \Delta$ is a diffeomorphism with inverse $\alpha_k$, so it follows that $\theta_{can}|_{L_{\phi_k}}=(\pi_{\Delta}|_{L_{\phi_k}})^{*}\alpha_k$.  So Proposition \ref{sphi} shows that the function $S_{\phi_k}\co L_{\phi_k}\to\R$ defined therein obeys $dS_{\phi_k}=-\theta_{can}|_{L_{\phi_k}}=-(\pi_{\Delta}|_{L_{\phi_k}})^{*}\alpha_k$ and hence \begin{equation}\label{sak} \alpha_k=-d(S_{\phi_k}\circ\alpha_k).\end{equation}
In particular $\alpha_k$ is the derivative of a compactly supported smooth function.

By assumption each $\phi_k$ is generated by a Hamiltonian $H_k$ with support contained in some fixed compact set $[0,1]\times K$ where $K\subset M$.  As is clear from the formula for $f_{\lambda,\phi_k}$ in the proof of Lemma \ref{f}, the functions $f_{\lambda,\phi_k}$ likewise have support contained in $K$.  So since $L_{\phi_k}$ coincides with the zero-section outside of $\Psi(\mathcal{U}_{\Delta}\cap (K\times K))$,  we see from (\ref{Sformula}) that $S_{\phi_k}$ vanishes at all points of form $\Psi(x,x)$ for $x\notin K$. So (identifying $K$ with its image in $\Delta$ by the diagonal embedding) the functions $-S_{\phi_k}\circ\alpha_k$ are all likewise supported in $K$.   

Now let us fix a Riemannian metric on $\Delta$ and denote by $A$ the diameter of $K$ with respect to $g$.  Also let $x_0\in \Delta\setminus K$ be of distance less than $1$ from $K$, so for any $x\in \Delta$ there is a path $\gamma_x\co[0,1]\to \Delta$ having $|\gamma'_{x}(t)|_g\leq A+1$ for all $t$.  Since $x_0\in\Delta\setminus K$, we have $S_{\phi_k}\circ\alpha_{k}(x_0)=0$.  Thus, for any $x\in K$, using (\ref{sak}) we have \begin{align*} S_{\phi_k}\circ\alpha_k(x) &=\int_{0}^{1}\frac{d}{dt}\left(S_{\phi_k}\circ\alpha_k(\gamma_{x}(t))\right)dt\leq (A+1)\max_{t\in [0,1]}|d(S_{\phi_k}\circ\alpha_k)(\gamma_x(t))|_g
\\&\leq (A+1)\max_{\Delta}|\alpha_k|_g.\end{align*}  Since for $x\notin K$ we have $S_k\circ\alpha_k(x)=0$, and since $\alpha_k\co \Delta\to L_{\phi_k}$ is surjective, this proves that \[ \max_{L_{\phi}}|S_{\phi_k}|\leq (A+1)\max_{\Delta}|\alpha_k|_g.\]  

But now the result is immediate from Corollary \ref{general}: the integrand of (\ref{Sint}) vanishes outside of $K$, so the integral is bounded above by \[ (A+1)\max_{\Delta}|\alpha_k|_{g}\int_{K}(d\lambda)^n.\]  The assumption that $\xymatrix{\phi_k\ar[r]^{C^0}&1_M}$ shows that $\max_{\Delta}|\alpha_k|_g\to 0$, so $\Cal(\phi_k)\to 0$ as desired.
\end{proof}

\subsection*{Acknowledgements} This work was motivated by many conversations with Y.-G. Oh related to Question \ref{mainq}, some of which occurred during a visit to the IBS Center for Geometry and Physics in May 2015; I thank IBS for its hospitality.  The work was partially supported by the NSF through grant DMS-1509213.

\end{document}